\newtheorem{lemma}{Lemma}
\newtheorem{theorem}{Theorem}
\newfont{\listfnt}{cmtt7 scaled\magstep0}
\newtheorem{remark}{Remark}
\newcommand{\goto}{\rightarrow}
\DeclareMathOperator{\E}{\mathsf{E}}
\DeclareMathOperator{\Prob}{\mathsf{P}}
\DeclareMathOperator{\argmin}{argmin}
\DeclareMathOperator{\f}{f}
\title{{\bf Event-Triggered Estimation of Linear Systems: An Iterative Algorithm and Optimality Properties \footnote{This is an extended version of the paper 'An Iterative Algorithm for Optimal Event-Triggered Estimation' that appears in the proceedings of the ADHS 2012.}}}
\author{ {\large Adam Molin\quad Sandra Hirche} \\
\ \\
Institute of Automatic Control Engineering\\
D-80290 Munich, Germany \\
\texttt{ fax: +49-89-289-28340} \\
\texttt{e-mail: adam.molin@tum.de, hirche@tum.de} \\
\texttt{http://www.lsr.ei.tum.de } \\
}
\date{}
\begin{document}

  \renewcommand{\baselinestretch}{1.0}
  \maketitle

\section{INTRODUCTION}

In contrast to periodic estimation, where measurements are sampled within equidistant time-intervals, an event-triggered estimator  receives measurement updates in an asynchronous fashion. Event-triggered sampling is also referred to as adaptive sampling in~\mbox{\cite{rabi:siam2012}}, Lebesgue sampling in~\mbox{\cite{AstromBernhardsson02_lebesgue}} and dead-band control in~\cite{otanez2002}, \mbox{\cite{hirchenetwork}}.
The event-trigger is a preprocessing unit situated at the sensor which decides upon its available information, whether to update the estimator with current information. Event-triggered sampling schemes for estimation are very promising in the context of networked control systems, where estimator and plant are spatially distributed and communication is a sparse resource. Examples for such networked control systems are given by sensor networks,  multi-robot systems and distributed power generation networks.
The work in \cite{AstromBernhardsson02_lebesgue} and \cite{rabi:siam2012} showed that event-triggered sampling outperforms periodic sampling with respect to the state estimation error of a first-order linear system in the presence of two different communication constraints. In  \cite{rabi:siam2012}, the communication constraint is induced by limiting the number of transmissions during a finite interval, whereas the work in \cite{AstromBernhardsson02_lebesgue} limits the average transmission rate. Differing to these approaches, we extend the standard minimum mean square estimator problem by an additional communication penalty to reflect the communication constraint in the optimization problem. A similar problem is also studied in \cite{Xu2004_optimal} and \cite{lipsa2011}.

Opposed to the aforementioned work which either fixes the estimator, such as \cite{AstromBernhardsson02_lebesgue,rabi:siam2012,Xu2004_optimal} or computes the estimator from the choice of the event-trigger, such as  \cite{lipsa2011}, we aim at the joint optimal design of the estimator and the event-trigger. Therefore, we formulate a two-player team problem with a nested information pattern, where the players are given by the event-trigger and the estimator. The joint design is motivated by the fact that the choice of the event-trigger may significantly influence the form of the optimal estimator.

The contribution of this report is two-fold. First, it develops an iterative method for the joint design of event-trigger and estimator for first-order stochastic systems with arbitrary distributions. The algorithm iteratively alternates between optimizing one player while fixing the other player. Similar iterative procedures are shown to be very promising methods for calculating optimal policies for team problems with non-classical information patterns, as studied by \cite{karlsson2011} for the Witsenhausen counterexample or by \cite{hajek2008paging} for the joint optimization of paging and registration policies.
It turns out that the proposed iterative method can yield a remarkable decrease of the overall cost compared to a design where the estimator is designed independently of the event-trigger. In such independent design, the optimal estimator takes the form of a linear predictor that assumes that transmission instants are statistically independent of the state, whereas the optimal event-trigger is a even threshold function of the estimation error. In the following, even and symmetric refer to the same meaning.\\
Second, it is shown that the solution of the algorithm converges to the independent design, when the densities of the initial state and the noise variables are symmetric and unimodal functions. This result coincides with results obtained in \cite{lipsa2011}, which uses majorization theory and rearrangement inequalities to show that there always exists a symmetric threshold policiy that outperforms an arbitrary event-triggering law. In fact, we show that symmetric threshold policies are optimal by analyzing the asymptotic behavior of the proposed iterative procedure.Therefore, our approach can be viewed as an alternative line of proof to show that symmetric policies are optimal under the aforementioned assumptions. 
On the other hand, it turns out that symmetry of the densities is not sufficient to show that the separate design is optimal. Numerical simulations indicate significant improvements, when noise densities are symmetric but multimodal.
In fact, simulations indicate a substantial improvement of our approach compared to an independent design in case of a bimodal noise distributions.


The remainder of this report is organized into four sections. In section~\ref{S:statement}, we introduce the stochastic system model and describe the problem setting. Section~\ref{S:main} contains the main results of this report and studies the joint design of event-trigger and estimator.
In section~\ref{S:numerical}, numerical simulations are conducted to validate the proposed method.

\textbf{Notation.} The expectation operator is denoted by~$\E_f[\cdot]$ and the conditional expectation is denoted by~$\E_f[\cdot|\cdot]$, where the underlying  probability measure $\Prob_f$ is parameterized by the policy $f$. The variable $X^k$ denotes the sequence of variables $[x_0,\ldots,x_k]$ and $X_k^l$ denotes the sequence $[x_k,\ldots,x_l]$. The indicator function is denoted by $\mathds{1}_{A}(x)$ taking a value of $1$ if $x\in A$ and $0$ otherwise. 
The complement of a set $A$ is denoted by $A^\text{c}$.
The maximum norm of a vector~\mbox{$x\in\mathbb{R}^n$} is denoted by~$|x|_\infty$. The convolution of two real-valued function $f$ and $g$ is denoted by $f * g$.


\section{PROBLEM FORMULATION}\label{S:statement}

We consider the following stochastic scalar discrete-time process $\mathcal{P}$ driven by noise $w_k$
\begin{equation}
    x_{k+1} = ax_k + w_k,
    \label{E:plant}
  \end{equation}
where \mbox{$a\in\mathbb{R}-\{0\}$}. The system noise $w_k$ takes values in~$\mathbb{R}$ and is an  i.i.d. (independent identically distributed) random variable described by the probability density function~$\phi_w$, which is zero-mean and has finite variance. The initial state, $x_0$ is statistically independent of $w_k$ and is described by density function $\phi_{x_0}$, which has a mean~$\bar{x}_0$ and a finite variance.
System parameters and statistics are known to the event-trigger and estimator.

The system model is illustrated in Fig. \ref{fig:systemmodel}. The process $\mathcal{P}$ outputs the state $x_k$. The event-trigger $\mathcal{E}$ decides upon its available information, whether or not to transmit the current state to the remote state estimator $\mathcal{S}$.
We define the output of the event-trigger as
\begin{equation*}
   \delta_k=\begin{cases} 1 & \text{update $x_k$ sent} \\ 0 & \text{otherwise} \end{cases}
  \end{equation*}
The channel $\mathcal{N}$ can be viewed as a $\delta_k$-controlled erasure channel whose outputs are described by
\begin{equation}
    z_{k} = \begin{cases} x_k & \delta_k=1 \\ \emptyset & \delta_k=0 \end{cases}
    \label{E:measurement}
  \end{equation}
where $\emptyset$ is the erasure symbol. As it will be useful for subsequent analysis, we define the last update time $\tau_k$ as
\begin{align}\label{eq:tau}
\tau_k=\max\{\kappa|\delta_\kappa=1,\ \kappa<k\}
\end{align}
with $\tau_k={-1}$, if no transmissions have occurred prior to~$k$. The variable $\tau_k$ can be described by the following~\mbox{$\delta_k$-controlled} difference equation
\begin{align}\label{eq:tauev}
\tau_{k+1}=\begin{cases}
k & \delta_k=1\\
\tau_k & \delta_k=0
\end{cases}\quad\tau_0=-1.
\end{align}

Admissible event-triggers are given by mappings of their past history to 
\[
\delta_k=f_k(X^k),\quad k=0,\ldots,N-1.
\]
The state estimator $\mathcal{S}$ outputs the state estimate $\hat x_k$ and is given by mappings $g_k$ defined by
\[
\hat x_k = g_k(Z^k),\quad k=0,\ldots,N-1.
\]

The design objective is to jointly design the event-trigger $f=[f_0,\ldots,f_{N-1}]$ and the estimator $g=[g_0,\ldots,g_{N-1}]$ that minimize cost $J$.

\begin{equation}\label{E:opt1}
 J=\E_{f,g}\left[\sum_{k=0}^{N-1} |x_k-\hat x_k|^2 + \lambda \delta_k\right].
\end{equation}
The per-stage cost of $J$ is composed of the squared estimation error $|x_k-\hat x_k|^2_2$ and a communication penalty~\mbox{$\lambda\delta_k$}.
The weight $\lambda$ determines the amount of penalizing transmissions over the channel $\mathcal{N}$.

\begin{figure}[htb]
 \centering
 \psfrag{PLANT}[c][c]{$\mathcal{P}$}
\psfrag{CONTR}[c][c]{$\mathcal{S}$}
\psfrag{N}[c][c]{$\mathcal{N}$}
\psfrag{S}[c][c]{$\mathcal{E}$}
\psfrag{z}[c][c]{$z_{k}$}
\psfrag{x}[c][c]{$x_k$}
\psfrag{y}[c][c]{\tiny } 
\psfrag{u}[c][c]{$\hat x_k$}
\psfrag{d}[c][c]{$\delta_k$} 
\includegraphics[width=0.45\textwidth]{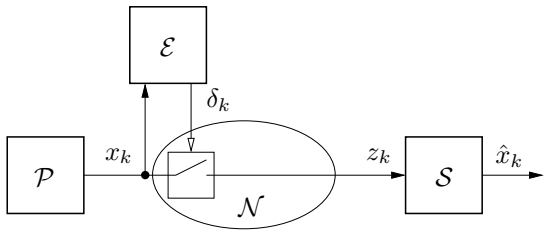}
 \caption{System model of the networked control system with plant $\mathcal{P}$, event-trigger $\mathcal{E}$, state estimator $\mathcal{S}$ and communication channel $\mathcal{N}$.}
 \label{fig:systemmodel}
\end{figure}

\section{JOINT DESIGN OF EVENT-TRIGGER AND ESTIMATOR}\label{S:main}

\subsection{Preliminaries}
We begin with a characterization of the optimal estimator.

\begin{lemma}
For any event-trigger $f$, the optimal state estimator $g^*$ is given by the least squares estimator
\begin{align*}
\hat x_k= g_k^*(Z^k)=\E_f[x_k|Z^k],\quad k=0,\ldots,N-1.
\end{align*}
\end{lemma}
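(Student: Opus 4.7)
The plan is to reduce the joint cost minimization in $g$ to a pointwise MMSE problem at each stage. First I would observe that for fixed $f$, the communication penalty $\lambda \delta_k = \lambda f_k(X^k)$ is a functional of the state trajectory and the event-trigger only; it does not depend on $g$ at all. Similarly, since $f_k$ maps $X^k \to \{0,1\}$, the joint distribution of $(X^{N-1}, \delta^{N-1}, Z^{N-1})$ under $\Prob_f$ is determined entirely by $f$ and by the statistics of $x_0, w_0, \ldots, w_{N-2}$; the estimator $g$ does not feed back into the transmission decisions. Hence minimizing $J$ over $g$ is equivalent to minimizing the estimation-error term $\sum_{k=0}^{N-1} \E_f[|x_k-\hat x_k|^2]$ with the distribution of $Z^k$ held fixed.

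Next I would decouple the sum across $k$. Because $\hat x_k = g_k(Z^k)$ depends only on the observation sequence up to time $k$ and not on any other $g_j$, and because the law of $Z^k$ is fixed once $f$ is fixed, the $k$-th term in the sum depends only on $g_k$. Thus one may minimize each term separately.

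For a single stage, I would apply the tower property:
\begin{equation*}
\E_f\bigl[|x_k - g_k(Z^k)|^2\bigr]
= \E_f\!\left[\,\E_f\bigl[|x_k - g_k(Z^k)|^2 \,\big|\, Z^k\bigr]\right].
\end{equation*}
For each realization of $Z^k$, the inner conditional expectation is a quadratic in the scalar $g_k(Z^k)$, and its minimizer is well known to be the conditional mean $\E_f[x_k\mid Z^k]$, with minimum value equal to the conditional variance. Integrating over $Z^k$, the choice $g_k^*(Z^k) = \E_f[x_k\mid Z^k]$ achieves the pointwise minimum and therefore the overall minimum, which is what the lemma claims.

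There is no real obstacle here beyond making the decoupling precise. The only subtle point worth emphasizing in the write-up is that the information pattern is such that fixing $f$ also fixes the distribution of $Z^k$: the event-trigger observes $X^k$ directly and not $\hat x_k$, so there is no dual effect of the estimator on future observations. Once this is noted, the classical MMSE argument applies stagewise and delivers the conditional expectation as the unique (a.s.) optimal estimator.
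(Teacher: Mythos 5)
Your proof is correct and follows essentially the same route as the paper: fix $f$, observe that the communication penalty and the law of $Z^k$ are then independent of $g$, and reduce to the standard stagewise MMSE problem whose solution is the conditional mean. The paper simply cites a textbook for the final MMSE step where you spell out the tower-property argument, and your explicit remark that the event-trigger observes $X^k$ rather than $\hat x_k$ (so there is no dual effect) makes precise what the paper leaves implicit.
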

\begin{proof}
Fix an arbitrary event-trigger $f$. The communication penalty term $\E_{f}\left[\sum_{k=0}^{N-1}\lambda \delta_k\right]$ is then constant and can be omitted from the optimization. In the remaining estimation problem the mean squared error~\mbox{$\E_f\left[\sum_{k=0}^{N-1}|x_k-\hat x_k|^2\right]$} is to be minimized. The optimal solution for this problem is given by the least squares estimator $\E_f[x_k|Z^k]$, \cite{Bertsekas2005v1}. This completes the proof.
\end{proof}

In the following, we define the linear predictor $\hat x_{k}^{\text{LP}}$ by the following recursion
\begin{align}\label{eq:lp}
\hat x_{k+1}^{\text{LP}}=\begin{cases}
x_{k+1} & \delta_k=1\\
a\hat x_{k}^{\text{LP}} & \delta_k=0
\end{cases}
\end{align}
with $\hat x_{0}^{\text{LP}}=\bar{x}_0$.
\begin{remark}
The linear predictor can be regarded as the optimal estimator, when having no information about the choice of the event-trigger $f$ and assuming that transmission instances are statistically independent of the state evolution. This also implies that the linear predictor is optimal in the case, when transmission instances are selected in advance.
\end{remark}

Similar to \cite{lipsa2011,MoHi2009}, let us rewrite the optimization problem by defining
\begin{align*}
e_k=x_k-a\hat x_{k-1}^{\text{LP}},\quad k=1,\ldots,N-1
\end{align*}
and $e_0=w_{-1}$, where we define $w_{-1}=x_0-\bar x_0$.
The variable~$e_k$ defines our new state to be estimated and follows the recursion
\begin{align}\label{eq:rec}
e_{k+1}=h_k(e_k,\delta_k,w_k)=(1-\delta_k)ae_k+w_k.
\end{align}
Further, we define $\hat e_k$ to be the least squares estimate~$\E[e_k|\tilde Z^k]$, where $\tilde z_k$ is defined accordingly as
\[
 \tilde z_{k} = \begin{cases} e_k & \delta_k=1 \\ \emptyset & \delta_k=0 \end{cases}
\]

The next lemma gives us further insights into the structure of $\hat e_k$.
\begin{lemma}\label{lem:2}
Let the event-trigger $f$ be fixed. Then, the least squares estimate of $e_k$ is given by
\begin{align*}
\hat e_k=\begin{cases}
e_k & \delta_k=1\\
\alpha_k(\tau_k) & \delta_k=0
\end{cases}
\end{align*}
where $\tau_k$ is defined by Eq. (\ref{eq:tau}) and $\alpha_k(\tau_k)$ is defined by
\begin{align}\label{eq:alpha}
\alpha_k(\tau_k)\hspace{-1pt}=\hspace{-1pt}\E_f\hspace{-2pt}\left[\sum_{l=\tau_k}^{k-1} a^{k-l-1}w_l | \delta_{\tau_k+1}=0,\ldots,\delta_{k}=0\right]\hspace{-3pt}.
\end{align}
\end{lemma}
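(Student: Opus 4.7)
The argument splits on $\delta_k$. When $\delta_k=1$, the value $e_k$ is itself the observation $\tilde z_k$, so the least--squares estimate collapses immediately to $\hat e_k=\E_f[e_k\mid\tilde Z^k]=e_k$.

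For $\delta_k=0$, I would first unroll the recursion (\ref{eq:rec}) between the last transmission time and $k$. Because $\delta_{\tau_k}=1$ forces $e_{\tau_k+1}=w_{\tau_k}$ (the $(1-\delta_{\tau_k})ae_{\tau_k}$ term vanishes) and $\delta_{\tau_k+1}=\cdots=\delta_{k-1}=0$ forces $e_{l+1}=ae_l+w_l$, iteration gives the closed form
\[
e_k=\sum_{l=\tau_k}^{k-1} a^{k-l-1}\,w_l,
\]
with the convention $e_0=w_{-1}$ covering the case $\tau_k=-1$. Substituting into the least--squares estimate and invoking linearity of conditional expectation reduces the lemma to showing, for each $l\in\{\tau_k,\ldots,k-1\}$,
\[
\E_f[w_l\mid\tilde Z^k]=\E_f[w_l\mid\delta_{\tau_k+1}=0,\dots,\delta_k=0],
\]
which on summing yields $\hat e_k=\alpha_k(\tau_k)$.

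The core step is this reduction of the conditioning. I would decompose $\sigma(\tilde Z^k)$ into $\sigma(\tilde Z^{\tau_k})$ together with the non--transmission events $\{\delta_\kappa=0\}_{\tau_k<\kappa\le k}$. By the i.i.d.\ assumption, $w_{\tau_k},\dots,w_{k-1}$ are independent of the noise variables generating $\tilde Z^{\tau_k}$, so conditioning on $\tilde Z^{\tau_k}$ drops out of the factor that concerns these noises. The main obstacle is that admissible event--triggers $f_\kappa$ are allowed to depend on the full state history $X^\kappa$, not only on the post--reset residual; one must argue that $x_{\tau_k}$ is a deterministic function of $\tilde Z^{\tau_k}$ (through the linear predictor~(\ref{eq:lp})), and that via the additive decomposition $x_\kappa=a^{\kappa-\tau_k}x_{\tau_k}+e_\kappa$ the post--$\tau_k$ indicators can then be re-cast as constraints on the block $(w_{\tau_k},\ldots,w_{k-1})$ alone. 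Once this measurability/independence accounting is in place, the conditional expectation takes the value $\alpha_k(\tau_k)$ as claimed.
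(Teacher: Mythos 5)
Your proposal follows essentially the same route as the paper's (three-sentence) proof: the case $\delta_k=1$ is immediate, and for $\delta_k=0$ you unroll recursion (\ref{eq:rec}) from the reset $e_{\tau_k+1}=w_{\tau_k}$ to obtain $e_k=\sum_{l=\tau_k}^{k-1}a^{k-l-1}w_l$, which is exactly how the paper obtains the form of $\alpha_k$. You go further than the paper in isolating the one genuinely delicate step, namely why conditioning on the full observation history $\tilde Z^k$ may be replaced by conditioning on the events $\delta_{\tau_k+1}=0,\ldots,\delta_k=0$ alone, and your appeal to the independence of the post-$\tau_k$ noises from $\tilde Z^{\tau_k}$ is the right mechanism.

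There is, however, a residual gap in your resolution of that step. Recasting the indicators $\{f_\kappa(X^\kappa)=0\}$, $\kappa>\tau_k$, as constraints on the block $(w_{\tau_k},\ldots,w_{k-1})$ still leaves those constraints parameterized by the observed value $x_{\tau_k}$ (and, for a fully history-dependent admissible trigger, by the unobserved pre-$\tau_k$ history $X^{\tau_k-1}$ as well). Consequently $\E_f[e_k\mid\tilde Z^k]$ is in general a function of the realized $x_{\tau_k}$, not of $\tau_k$ alone, and need not coincide with the right-hand side of (\ref{eq:alpha}), which marginalizes over everything except the $\delta$-events; a trigger such as $\delta_1=\mathds{1}_{\{w_0>x_0\}}$ after a transmission at $k=0$ already exhibits this. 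The identity becomes exact once the trigger is restricted to the form $f_\kappa(e_\kappa,\tau_\kappa)$ (the class the paper adopts from Lemma \ref{lem:suff} onward), since then each event $\{\delta_\kappa=0\}$ with $\kappa>\tau_k$ is measurable with respect to $(w_{\tau_k},\ldots,w_{\kappa-1})$ and hence independent of $\tilde Z^{\tau_k}$. To be fair, the paper's own proof hides the same issue behind the bare assertion that $\tau_k$ is a sufficient statistic, so the gap is inherited from the lemma's statement rather than introduced by you; but a complete argument should either impose this restriction on $f$ explicitly or read the conditioning in (\ref{eq:alpha}) as additionally including $\tilde Z^{\tau_k}$.
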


\begin{proof}
Clearly, we have $\hat e_k=e_k$ for $\delta_k=1$, as~\mbox{$e_k\in \tilde Z^k$}. For $\delta_k=0$, $\tau_k$ is a sufficient statistics for $\hat e_k$. The mapping $\alpha_k$ is determined by applying recursively (\ref{eq:rec}) with~\mbox{$e_{\tau_k+1}=w_{\tau_k}$}. This completes the proof.
\end{proof}

The function $\alpha$ in Lemma \ref{lem:2} can be interpreted as a bias term to improve the state estimate by incorporating additional information $\delta_{\tau_k+1}=\cdots=\delta_k=0$ at time $k$. 

Rather than regarding $\alpha$ as a function of $k$ and $\tau_k$, we will interpret $\alpha$ as a vector in $\mathbb{R}^{\frac{1}{2}N(N+1)}$ by reindexing its entries appropriately.

It is straightforward to see that the estimation error~\mbox{$e_k-\hat e_k$} and~$x_k-\hat x_k$ are identical random variables for a fixed event-trigger $f$, as $e_k$ corresponds to a translatory coordinate transformation of $x_k$ shifted by $-a\hat x_{k-1}^{\text{LP}}$ which is known since the sequence $\delta^{k-1}$ is measurable with respect to~$Z^k$.
Therefore, our initial optimization problem with cost function $J$ can be rewritten as
\begin{align}\label{eq:opt2}
\min_f \E_f\left[\sum_{k=0}^{N-1} (1-\delta_k)|e_k-\alpha_k(\tau_k)|^2 + \lambda \delta_k\right].
\end{align}
It can be observed that the running cost reduces to $\lambda$ and is therefore independent of the current $\alpha_k$ in the case~\mbox{$\delta_k=1$}.
Because of the introduction of the state $e_k$, the event-trigger $f$ is given by a mapping from $E^k$ to $\{0,1\}$. Since there always exists a bijection from $X^k$ to $E^k$ given the variables $\delta_0,\ldots,\delta_{k-1}$, this change of variables does not put any restrictions on the further analysis keeping in mind that any policy expressed in $E^k$ can also be written as a function in $X^k$.

\subsection{Iterative procedure}

What prevents a further study of the optimization problem~(\ref{eq:opt2}) is the fact that the value $\alpha_k$ at $\tau_k$ depends on the particular policy~$f$ chosen up to time $k$. Therefore, methods like dynamic programming are not directly applicable to solve~(\ref{eq:opt2}).
In order to overcome this burden, we relax optimization problem~(\ref{eq:opt2}) by considering the variable $\alpha_k$ as a new decision variable being a function of $\tau_k$. Then, the optimization problem is given by
\begin{align}\label{eq:opt3}
\min_{f,\alpha} J
\end{align}
with
\begin{align}\label{eq:cost}
J(f,\alpha)=\E_{f}\left[\sum_{k=0}^{N-1} (1-\delta_k)|e_k-\alpha_k(\tau_k)|^2 + \lambda \delta_k\right].
\end{align}
The optimization problem (\ref{eq:opt3}) enlarges the set of possible solutions compared to optimization problem (\ref{eq:opt2}), because it omits the constraint for $\alpha$ given by (\ref{eq:alpha}). By considering optimization problem (\ref{eq:opt3}), we are able to specify the structure of the optimal event-trigger, which is given by the following lemma.
\begin{lemma}\label{lem:suff}
Let $\alpha$~be~fixed. Then, for all~\mbox{$k\in\{0,\ldots,N-1\}$} the variables $e_k$ and $\tau_k$ are a sufficient statistics for the optimal event-trigger $\f_k$ .
\end{lemma}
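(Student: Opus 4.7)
The natural approach is dynamic programming. I would show that, with $\alpha$ fixed, the pair $(e_k,\tau_k)$ forms the state of a controlled Markov process whose per-stage cost also depends only on $(e_k,\tau_k,\delta_k)$, so that the principle of optimality yields an optimal event-trigger that is a function only of $(e_k,\tau_k)$.

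First I would verify the Markov structure. From recursion (\ref{eq:rec}) we have $e_{k+1}=(1-\delta_k)ae_k+w_k$, and from (\ref{eq:tauev}) we have $\tau_{k+1}=k\mathds{1}_{\{\delta_k=1\}}+\tau_k\mathds{1}_{\{\delta_k=0\}}$. Because $w_k$ is i.i.d.\ and independent of the past, the conditional distribution of $(e_{k+1},\tau_{k+1})$ given the full history $(E^k,\Delta^{k-1})$ and the control $\delta_k$ depends only on $(e_k,\tau_k,\delta_k)$. Moreover, with $\alpha$ fixed, the per-stage cost $(1-\delta_k)|e_k-\alpha_k(\tau_k)|^2+\lambda\delta_k$ is a deterministic function of $(e_k,\tau_k,\delta_k)$.

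Next I would set up the Bellman recursion. Define
\begin{align*}
V_N(e,\tau)&=0,\\
V_k(e,\tau)&=\min_{\delta\in\{0,1\}}\Big\{(1-\delta)|e-\alpha_k(\tau)|^2+\lambda\delta\\
&\qquad+\E\big[V_{k+1}(e_{k+1},\tau_{k+1})\,\big|\,e_k=e,\tau_k=\tau,\delta_k=\delta\big]\Big\}.
\end{align*}
By the Markov property just established, the conditional expectation depends only on $(e,\tau,\delta)$ and is therefore well-defined. A backward induction on $k$ then shows that $V_k$ is well-defined and that the minimizer is a function solely of $(e,\tau)$, so a policy of the asserted form attains the optimum within the restricted class of $(e_k,\tau_k)$-measurable policies.

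The main step — and the place where I expect most care is needed — is showing that this restricted optimum is actually optimal among all admissible policies $f_k$ measurable with respect to $E^k$ (equivalently $X^k$). I would argue this by a standard measurable-selection / tower-property argument: for any admissible $f$, writing the total cost as the telescoping sum of expected one-step costs, conditioning on $(e_k,\tau_k)$ and using the Markov property together with the fact that $w_k$ is independent of $(E^k,\Delta^{k-1})$ shows $\E_f[J]\geq\E_f[V_0(e_0,\tau_0)]$, with equality attained by any policy whose action at time $k$ achieves the pointwise minimum in the Bellman recursion. Since the right-hand side does not depend on $f$, this establishes that an optimal event-trigger may be chosen as a function only of $(e_k,\tau_k)$, completing the proof.
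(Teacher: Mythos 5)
Your proposal is correct and follows essentially the same route as the paper: the paper also observes that $(e_k,\tau_k)$ evolves as a $\delta_k$-controlled Markov process via (\ref{eq:tauev}) and (\ref{eq:rec}) with a per-stage cost depending only on $(e_k,\tau_k,\delta_k)$, and then invokes the standard dynamic-programming sufficiency result (citing Bertsekas) where you instead spell out the Bellman recursion and verification argument explicitly.
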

\begin{proof}
The evolution of the pair $(e_k, \tau_k)$ can be regarded as a $\delta_k$-controlled Markov process defined by (\ref{eq:tauev}) and (\ref{eq:rec}). The running cost of $J$ at time $k$ is a function of the pair $(e_k, \tau_k)$, input $\delta_k$ and noise $w_k$. By \cite{Bertsekas2005v1}, this problem can be solved by dynamic programming with $(e_k, \tau_k)$ being the state, which is a sufficient statistics of the optimal solution $f_k$. This completes the proof. 
\end{proof}
Lemma \ref{lem:suff} implies that the optimal event-trigger is a function of $e_k$ and $\tau_k$. It can be observed that for a fixed event-trigger~$f$, the optimal map $\alpha$ can be calculated by Eq. (\ref{eq:alpha}). On the other hand, for any fixed map $\alpha$, the optimal event-trigger $f$ can be calculated by dynamic programming. We therefore define the running cost and the Bellman operator as follows
\begin{align*}
&c^{\alpha_k}_k(e_k,\tau_k,\delta_k)=(1-\delta_k)|e_k-\alpha_k(\tau_k)|^2 + \lambda \delta_k
\\
&\mathcal{T}^{\alpha_k}_k J_{k+1}(\cdot)=
\\&=\min_{\delta_k\in\{0,1\}}c^{\alpha_k}_k(\cdot,\delta_k)+\E\left[J_{k+1}(e_{k+1},\tau_{k+1}) |\cdot,\delta_k\right]
\end{align*}
The value function $J_k$ being a function of the augmented state $(e_k,\tau_k)$ is determined by recursive application of the Bellman equation given by \[
J_k=\mathcal{T}^{\alpha^i_k}_k J_{k+1}\]
 with $J_N\equiv 0$, where the argument in the minimization yields the optimal event-trigger~$f$ and we have
 \[
 J(f,\alpha)=\E_{f}[J_0(e_0,-1)].
 \]

This observation motivates us to propose the following iterative procedure sketched in Fig. \ref{fig:iter}, which alternates between optimizing $f$ while fixing policy $\alpha$ and vice versa. Algorithm \ref{al:1} describes the iterative procedure. With slight abuse of notation, we declared $\tau_k$ as a second subscript instead of an argument of $\alpha_k$.

\begin{algorithm}[phbt]
  \begin{algorithmic}[1]
    \Require {$\alpha^0_{k,\tau_k}\in\mathbb{R},\quad k=0,\ldots,N-1,\tau_k=-1,\ldots, k-1$}
    \State $i\gets 0$
            \label{al:1:init}

    \Statex
	\Repeat
    	
    	\State $k=N$, $J_N\equiv 0$
    	\Repeat 
    		\State $k\gets k-1$
      		\State $J_k\gets\mathcal{T}^{\alpha^i_k}_k J_{k+1}$
      		\State \mbox{$f^i_k(e_k,\tau_k)\in\argmin_{\delta_k\in{0,1}} c^{\alpha^i_k}_k(e_k,\tau_k,\delta_k)$}  $\phantom{\hspace{65pt}}+\E\left[ J_{k+1}(e_{k+1},\tau_{k+1})|e_k,\tau_k,\delta_k  \right]$
    	\Until{{$k= 0$}} \label{al:1:target2}
      		 
    	\State $\alpha_{k,\tau_k}^{i+1}\gets \E_{f^i}\left[\sum_{l=\tau_k}^{k-1} a^{k-l-1}w_l | \delta^k_{\tau_k+1}=0\right]$
    	\State  $i \gets i+1$
	\Until{convergence}

  \end{algorithmic}
\caption{Iterative procedure to calculate $(f,\alpha)$}   \label{al:1}
\end{algorithm}

 
\begin{figure}[tb]
 \centering
 {
 \sffamily
 \psfrag{E}[c][c]{\small estimator $\mathcal{S}$}
 \psfrag{T}[c][c]{\small event-trigger $\mathcal{E}$\phantom{1}}
\psfrag{L}[c][c]{\footnotesize least squares estimation}
\psfrag{D}[c][c]{\footnotesize dynamic programming} 
\includegraphics[width=0.40\textwidth]{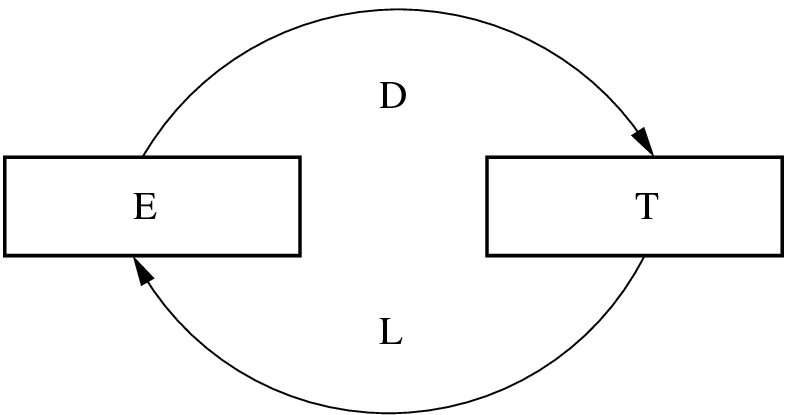}
}
 \caption{Iterative scheme to calculate event-trigger $\mathcal{E}$ and estimator $\mathcal{S}$.}
 \label{fig:iter}
\end{figure}

As the cost $J$ decreases or is at least kept constant in each step of the iteration, the sequence $[(f^0,\alpha^0),(f^1,\alpha^1),\ldots]$ produces a non-increasing succession of costs $J$.

In the following subsection, we are interested in the convergence properties of the proposed iterative algorithm for symmetric unimodal distributions.

\subsection{Symmetric unimodal distributions}

In the following, we consider the iterative procedure described in previous subsection as a discrete-time dynamical system and consider $\alpha$ as the state. By using Lyapunov stability theory we show that $\alpha\equiv 0$ is a globally asymptotically stable equilibrium point, when initial state $e_0$ and the noise process $\{w_k\}$ has a symmetric unimodal distribution.
The next lemma finds a potential equilibrium point only by assuming symmetric distributions.
\begin{lemma}\label{lem:fix}
Let the initial state $e_0$ and the noise process~$\{w_k\}$ have symmetric distributions. Then $\alpha^*\equiv 0$ is a fixpoint of the Algorithm \ref{al:1}. The policy of the event-trigger~$f^*$ that corresponds to $\alpha^*$ is an even mapping of~$e_k$ and independent of $\tau_k$ for~\mbox{$k=0,\ldots,N-1$}. 
\end{lemma}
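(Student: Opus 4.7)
The plan is to establish the fixpoint property in two directions: first, show that when $\alpha\equiv 0$ is fed into the dynamic-programming step of Algorithm~\ref{al:1}, the resulting $f^*$ is an even function of $e_k$ and independent of $\tau_k$; second, show that feeding this $f^*$ back into the least-squares step returns $\alpha\equiv 0$ again.

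For the first direction, I would proceed by backward induction on the value function $J_k(e_k,\tau_k)$ generated by $\mathcal{T}^{0}_k$. Setting $\alpha_k\equiv 0$ collapses the running cost to $c^0_k(e_k,\tau_k,\delta_k)=(1-\delta_k)|e_k|^2+\lambda\delta_k$, which is already even in $e_k$ and independent of $\tau_k$. Starting from $J_N\equiv 0$, I would assume inductively that $J_{k+1}$ depends only on $e_{k+1}$ and is even. The expectation in the Bellman operator splits on $\delta_k$: for $\delta_k=1$ one gets $\mathbb{E}[J_{k+1}(w_k)]$, a constant in $(e_k,\tau_k)$; for $\delta_k=0$ one gets $\mathbb{E}[J_{k+1}(ae_k+w_k)]$, which using the evenness of $\phi_w$ (send $w_k\mapsto -w_k$) and the evenness of $J_{k+1}$ is readily shown to be even in $e_k$. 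Taking the minimum over $\delta_k$ preserves both properties, so $J_k$ is even in $e_k$ and independent of $\tau_k$, and the argmin policy $f^*_k(e_k,\tau_k)$ inherits this structure (with ties broken symmetrically).

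For the second direction, I would analyze the conditional expectation
\begin{equation*}
\alpha^{\text{new}}_{k,\tau_k}=\mathbb{E}_{f^*}\Bigl[\sum_{l=\tau_k}^{k-1}a^{k-l-1}w_l\,\Big|\,\delta_{\tau_k+1}=0,\ldots,\delta_k=0\Bigr].
\end{equation*}
Because after an update we have $e_{\tau_k+1}=w_{\tau_k}$, unrolling the recursion (\ref{eq:rec}) under the no-transmission condition gives $e_j=\sum_{l=\tau_k}^{j-1}a^{j-l-1}w_l$ for $j\in\{\tau_k+1,\ldots,k\}$, so the quantity inside the expectation equals $e_k$. I would then apply the measure-preserving involution $(w_{\tau_k},\ldots,w_{k-1})\mapsto(-w_{\tau_k},\ldots,-w_{k-1})$: symmetry of $\phi_w$ leaves the joint law invariant, while the substitution flips the sign of every $e_j$ in the chain. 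Since $f^*_j$ is even in $e_j$ and free of $\tau_j$, each indicator $\{\delta_j=0\}$ is preserved, so the conditioning event is invariant. Therefore the conditional law of $e_k$ is symmetric about zero, and $\alpha^{\text{new}}_{k,\tau_k}=0$, closing the fixpoint argument.

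The main obstacle I anticipate is carefully sustaining both invariants (evenness in $e_k$ \emph{and} independence from $\tau_k$) through the backward induction: in particular, verifying that when $\delta_k=0$ the update $\tau_{k+1}=\tau_k$ does not reintroduce $\tau$-dependence, which is only guaranteed because the inductive hypothesis has already eliminated $\tau_{k+1}$ from $J_{k+1}$, and that the involution argument in the second step genuinely leaves the conditioning event invariant, which hinges on $f^*$ depending on $e_j$ solely through an even function. A minor subtlety worth checking is the treatment of ties in the argmin, which can be resolved by choosing the symmetric selection rule $f^*_k(-e_k,\tau_k)=f^*_k(e_k,\tau_k)$.
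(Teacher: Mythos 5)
Your proposal is correct, and its first half coincides with the paper's argument: both establish by backward induction that with $\alpha\equiv 0$ the running cost, hence every value function $J_k$, is even in $e_k$ and free of $\tau_k$ (evenness surviving the conditional expectation, the addition of the cost, and the pointwise minimum), so the argmin policy $f^*_k$ is an even function of $e_k$ alone. You are in fact slightly more careful than the paper in explicitly carrying the $\tau_k$-independence through the induction and in flagging the tie-breaking convention in the argmin. The second half is where you genuinely diverge. The paper works analytically with the conditional density $\phi_{e_k|\tau}$: it shows this density is obtained by truncating $\phi_{e_0}$ (resp.\ $\phi_w$) against $(1-f^0_k)$ and normalizing, and that for fixed $\tau$ it evolves by a convolution-plus-truncation recursion that preserves evenness, whence the conditional mean $\alpha^1_{k,\tau}$ vanishes. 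You instead apply the measure-preserving involution $(w_{\tau_k},\ldots,w_{k-1})\mapsto(-w_{\tau_k},\ldots,-w_{k-1})$, observe that it negates every $e_j$ in the inter-transmission window while leaving the conditioning event invariant (precisely because $f^*$ is even and $\tau$-free), and conclude that the conditional law of $e_k$ is symmetric about zero. Your involution is a cleaner one-shot argument that avoids a second induction; the paper's density recursion is heavier here but is not wasted effort, since essentially the same recursion is reused in the proof of Theorem~\ref{thm:1}, where one must additionally propagate a unimodality-type property of $\phi_{y_k|\tau}$ that a sign-flip symmetry alone cannot deliver. One point worth making explicit in your write-up: for $\tau_k=-1$ the involution must also flip $w_{-1}=x_0-\bar x_0$, which is legitimate only because the lemma assumes $e_0$ itself has a symmetric distribution.
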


\begin{proof}
Let us choose the map $\alpha^0$ to be $0$ for all~$k$ and all~$\tau$ in the initialization of Algorithm \ref{al:1}.
The cost function~$J$ reduces then to 
\begin{align*}
J(f,\alpha^0)=\E_f\left[\sum_{k=0}^{N-1} (1-\delta_k)|e_k|^2 + \lambda \delta_k\right]
\end{align*}
where $e_k$ evolves by recursion (\ref{eq:rec}). Therefore, the resulting optimal $f^0_k$ is only a function of $e_k$ for all~\mbox{$k=0,\ldots,N-1$}.
In the following, we first show that the application of the Bellman operator $\mathcal{T}^{0}_k$ preserves symmetry of the value function $J_{k+1}$ for any $k$. Given an even value function~$J_{k+1}$, the conditional expectation $\E\left[J_{k+1}(e_{k+1},\tau_{k+1}) |\cdot,\delta_k\right]$ preserves symmetry for both $\delta_k=0$ and $\delta_k=1$. Adding the cost $c^{0}_k(\cdot,\delta_k)$ also preserves symmetry, because the sum of two even functions is again even. Taking the pointwise minimum of two even functions yields an even function. Therefore, an even function remains even after application of the Bellman operator.
As $J_N\equiv 0$ is an even function, it follows by induction that every value function $J_{k}$ is even for~\mbox{$k\in\{0,\ldots,N-1\}$}. This implies that the  $f^0_k$ resulting in the first iteration step from Algorithm \ref{al:1} is an even mapping of $e_k$, if $\alpha^0\equiv 0$.\\
Next, we calculate $\alpha^1$ assuming $f^0_k$ being an even function of $e_k$ for~\mbox{$k\in\{0,\ldots,N-1\}$}.
Let $\phi_{e_k|\tau}$ be defined as the density function of the conditional probability distribution of $e_k$ given $\tau_k$ and $\delta_k=0$, when using event-trigger $f^0$. The definition of $\phi_{e_k|\tau}$ yields the following calculation of $\alpha_{k,\tau}^{1}$
\begin{align*}
 \alpha_{k,\tau}^{1} = \int_{e\in\mathbb{R}} e \cdot\phi_{e_k|\tau}(e) de
\end{align*}
For $k=0$, $\phi_{e_k|\tau}$ is determined by truncating the density function $\phi_{e_0}$ of the initial state $e_0$ at all $(e,\tau)$, where~$f^1_0$ takes a value of $1$ and by normalizing the resulting function, i.e.
\begin{align}\label{eq:pflem41}
\phi_{e_0|\tau}(e)=\frac{\phi_{e_0}(e)\cdot(1-f^0_0(e,\tau))}{\int_{e\in\mathbb{R}} \phi_{e_0}(e)\cdot(1-f^0_0(e,\tau)) de}.
\end{align}
Since $\phi_{e_0}$ and $f^0_0$ are even functions, we conclude that $\phi_{e_0|\tau}$ is even and therefore we have $\alpha_{0,-1}^{1}=0$.
Along the same lines, we can show  that $\phi_{e_{k}|k-1}$ is even and $\alpha_{k,k-1}^{1}=0$ for~\mbox{$k\in\{1,\ldots,N-1\}$} by replacing $\phi_{e_0}$ with $\phi_{w}$ in (\ref{eq:pflem41}) .
For a constant $\tau$, the conditional density function $\phi_{e_{k}|\tau}$ evolves by the recursion
\begin{align*}
&\phi_{e_{k+1}|\tau}(e)=&
\\&=\frac{(\frac{1}{|a|}\phi_{e_k|\tau}(\frac{(\cdot)}{a})*\phi_{w})(e)\cdot(1-f^0_k(e,\tau))}{\int_{e\in\mathbb{R}} (\frac{1}{|a|}\phi_{e_k|\tau}(\frac{(\cdot)}{a})*\phi_{w})(e)\cdot(1-f^0_k(e,\tau)) de}.&
\end{align*}
It can be observed that this recursion preserves symmetry of the conditional density function $\phi_{e_{k}|\tau}$, as $f^0_k$ is an even function. 
Therefore, we have shown that $\alpha^*\equiv 0$ is a fixpoint of Algorithm \ref{al:1}, which completes the proof.



\end{proof}

In above lemma, the distributions need not to be unimodal, but only symmetry properties are required. A natural question arising  from  Lemma \ref{lem:fix} is whether the fixpoint at $0$ is a stable and unique fixpoint. This question is partly answered in the following Theorem by adding the assumption that the distributions are unimodal.

\begin{theorem}\label{thm:1}
Let  the initial state $e_0$ and the noise process~$\{w_k\}$ have symmetric and unimodal distributions. Then,~$\alpha^*\equiv0$ is a globally asymptotically stable fixpoint of Algorithm~\ref{al:1}.
\end{theorem}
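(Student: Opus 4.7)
My plan is to view Algorithm~\ref{al:1} as a discrete-time dynamical system on $\mathbb{R}^{N(N+1)/2}$ with state $\alpha^i$ and update map $\alpha^{i+1}=G(\alpha^i)$, where $G$ is the composition of the DP-based trigger computation with the subsequent conditional-expectation step. Lemma~\ref{lem:fix} already gives $G(0)=0$. To upgrade this fixpoint to a globally asymptotically stable one, I would look for a Lyapunov function $V$ with $V(0)=0$, $V(\alpha)>0$ otherwise, $V$ radially unbounded, and $V(G(\alpha))<V(\alpha)$ for $\alpha\neq 0$; the discrete-time Lyapunov theorem then concludes. A natural first candidate is $V(\alpha)=\max_{k,\tau}|\alpha_{k,\tau}|$, switching to a weighted quadratic form if a strict decrease cannot be extracted directly in the sup norm.

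The analytical engine should be a ``truncation contracts the mean'' fact for symmetric unimodal densities $\phi$ centered at $0$: if $I\subset\mathbb{R}$ is an interval symmetric about a point $c\neq 0$ with $0<\int_I\phi\,\d x<1$, then the truncated mean $\int_I x\phi(x)\,\d x\big/\int_I\phi(x)\,\d x$ has magnitude strictly smaller than $|c|$, since unimodality places strictly more mass on the half of $I$ closer to $0$. Iterated truncations of this form, which is exactly what the conditional-density recursion behind $\alpha^{i+1}$ produces, inherit this contraction.

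The main obstacle is establishing the correct centering for the trigger delivered by the DP step: for fixed $\alpha^i$ one needs the no-transmit region $D_k(\tau)=\{e:f^i_k(e,\tau)=0\}$ to be an interval symmetric about $\alpha^i_k(\tau)$, or at least close enough to one for the truncation estimate to go through. The natural route is backward induction on $J_k(\cdot,\tau)$: show that at every stage it is symmetric and unimodal about a point determined by the running bias, and that this shape is preserved under the three operations appearing in the Bellman recursion, namely adding the quadratic $(1-\delta_k)|e-\alpha^i_k(\tau)|^2$, convolving with the symmetric unimodal noise density $\phi_w$, and pointwise minimizing against the constant transmit branch. Symmetry and unimodality of $\phi_w$ are crucial exactly at the convolution step, via the classical fact that the convolution of two symmetric unimodal densities is itself symmetric unimodal. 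A subtlety to watch is that a generic $\alpha^i$ biases the quadratic term but not the noise, so some care is required to track how the minimizer of $J_k(\cdot,\tau)$ depends on the running bias as opposed to $0$.

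Once the interval structure of $f^i$ is in hand, the conditional-density recursion displayed in the proof of Lemma~\ref{lem:fix} realizes $\alpha^{i+1}_{k,\tau}=\E_{f^i}[e_k\mid \delta^k_{\tau+1}=0]$ as an iterated composition of truncated-mean maps of precisely the type to which the key lemma applies. Telescoping these contractions from $\tau$ up to $k$ yields $\|G(\alpha)\|_\infty<\|\alpha\|_\infty$ for $\alpha\neq 0$, hence $V(G(\alpha))<V(\alpha)$, and the Lyapunov theorem then delivers global asymptotic stability of $\alpha^*\equiv 0$.
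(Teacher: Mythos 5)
Your high-level architecture coincides with the paper's: view the iteration as a discrete-time system in $\alpha$, take $V(\alpha)=|\alpha|_\infty$ as Lyapunov function, run a backward induction on the value functions to pin down the shape of the no-transmit sets, and then push a truncation-plus-convolution recursion through the conditional densities. However, the central structural claim your plan rests on --- that $J_k(\cdot,\tau)$ is symmetric and unimodal about a single point determined by the running bias, so that the no-transmit region $D_k(\tau)$ is an interval symmetric about $\alpha^i_{k,\tau}$ --- is false for a generic $\alpha^i$ and is not what the paper proves. The Bellman recursion at stage $k$ adds a quadratic centered at $\alpha^i_{k,\tau}$ to a continuation value built from quadratics centered at $\alpha^i_{k+1,\tau},\ldots,\alpha^i_{N-1,\tau}$; a sum of even unimodal functions with \emph{different} centers is in general neither even nor unimodal about any point, so the induction hypothesis you propose cannot be maintained, and the no-transmit set need not be an interval at all. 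You flag this as ``a subtlety to watch'' but it is precisely the obstruction. The paper circumvents it by (i) rescaling $y_k=a^{-k}e_k$, $\beta_{k,\tau}=a^{-k}\alpha_{k,\tau}$ so the no-transmit dynamics become a pure random walk, and (ii) proving only a \emph{one-sided} reflection inequality about the single global point $\beta^i_\infty=|\beta^i|_\infty$, namely $\hat J_k(\beta^i_\infty+\Delta,\tau)\ge\hat J_k(\beta^i_\infty-\Delta,\tau)$ for $\Delta\ge 0$, which holds because every center $\beta^i_{k,\tau}$ lies to the left of $\beta^i_\infty$; this is propagated through the Bellman operator by splitting $\hat J_{k+1}$ into a part symmetric about $\beta^i_\infty$ plus a nonnegative remainder supported on $\{y>\beta^i_\infty\}$ before convolving with the noise density.

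The second gap is in the ``iterated truncations inherit the contraction'' step. Your key lemma (truncated mean of a symmetric unimodal density over an interval centered at $c\neq 0$ has magnitude $<|c|$) applies only to a density that is symmetric and unimodal about $0$. After the first truncation and convolution, $\phi^i_{y_{k}|\tau}$ is no longer symmetric or unimodal, so the lemma cannot be telescoped from $\tau$ up to $k$ as stated. The correct invariant to propagate is again one-sided: $\phi^i_{y_k|\tau}(\beta^i_\infty+\Delta)\le\phi^i_{y_k|\tau}(\beta^i_\infty-\Delta)$ for all $\Delta\ge 0$, which is preserved both by truncation to the no-transmit set (using the one-sided property of $f^i$ established above) and by convolution with a symmetric unimodal noise density, and which directly yields $\beta^{i+1}_{k,\tau}\le\beta^i_\infty$ (with the mirror argument giving the lower bound, and strictness when $\beta^i_\infty\neq 0$). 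In short, your Lyapunov function and overall strategy are right, but both the centering of the induction and the form of the contraction lemma need to be replaced by one-sided statements anchored at $|\beta^i|_\infty$ for the argument to close.
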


The proof can be found in the appendix.

As the iterative Algorithm \ref{al:1} produces a sequence of pairs $(f^i,\alpha^i)$ whose costs are non-increasing with increasing $i$, we conclude that $0$ is the optimal choice for~$\alpha$, when noise distributions are symmetric and unimodal according to Theorem \ref{thm:1}.
The optimal state estimator of $x_k$ is then given by the linear predictor in (\ref{eq:lp}) and is therefore independent of the choice of the event-trigger~$f$. The distribution of the initial state $x_0$ must be also symmetric and unimodal, but its mean $\bar x_0$ can be chosen arbitrarily. Hence, the symmetry axis of the distribution of $x_0$ need not to be at zero. In order to determine the optimal $f^*$, dynamic programming must only be applied once with~\mbox{$\alpha\equiv 0$}.
 Therefore, the joint design approach in the case of symmetric densities can be considered as an independent design of event-trigger and estimator.\\
This result is in accordance with \cite{lipsa2011} and constitutes an alternative way by analyzing the asymptotic behavior of Algorithm \ref{al:1} to prove that symmetric event-triggering laws are optimal in the presence of symmetric unimodal distributions. Moreover, the iterative algorithm may be applied to arbitrary distributions. Although~\mbox{$\alpha\equiv 0$} is a fix point of the Algorithm~\ref{al:1} by Lemma~\ref{lem:fix} assuming symmetric density functions, the next section shows that an independent approach given by~\mbox{$\alpha\equiv 0$} can be outperformed by Algorithm \ref{al:1} by almost $50\%$. Hence, we can conclude that symmetry of the densities is not sufficient to show that the independent design is optimal. Therefore, additional assumptions are required to show that the independent design is optimal. In the case of Theorem \ref{thm:1} such requirement is given by the unimodality assumption of the density functions.


\section{NUMERICAL VALIDATION}\label{S:numerical}

This subsection intends to outline the benefits of the proposed iterative algorithm by numerical examples. Besides, it validates the obtained results for unimodal noise distributions. We compare the iterative algorithm with the optimal symmetric event-trigger having a linear predictor, i.e. assuming $\alpha\equiv 0$.
Suppose the process defined by~(\ref{E:plant}) with $a=1$, a communication penalty $\lambda=0.5$ and the distribution of the initial state and the system noise are identical and defined by the density function $\phi_w$
\begin{align*}
\phi_w(\mu,\sigma)=\frac{1}{2}\phi_\mathcal{N}(\mu, \sigma)+\frac{1}{2}\phi_\mathcal{N}(-\mu, \sigma)
\end{align*}
with
\[
\phi_\mathcal{N}(\mu,\sigma)=\frac{1}{\sqrt{2\pi\sigma^2}}e^{-\frac{(x-\mu)^2}{2\sigma^2}}.
\]
In the special case of $\mu=0$, we retrieve the normal distribution. In order to facilitate comparability between different distributions, we choose $\mu\in [0,1)$ and set
\[
\sigma=\sqrt{1-\mu^2}
\]
that yields an identical variance of $1$ for all $\mu\in [0,1)$. In the limit $\mu\goto 1$, the noise process degrades to a Bernoulli process taking discrete values $\{-1,1\}$ with probability $\frac{1}{2}$. Various density functions for different $\mu$ are sketched in Fig. \ref{fig:bimodal}.

\begin{figure}[tb]
 \centering
 {
 \sffamily
 \psfrag{Y}[c][c]{\small density function $\phi_w$}
 \psfrag{X}[c][c]{\small system noise $w$\phantom{1}}
\includegraphics[width=0.43\textwidth]{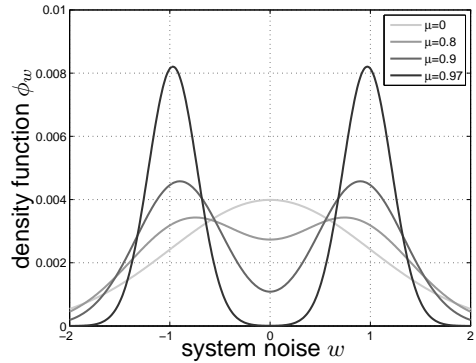}
}
 \caption{Various bimodal/unimodal density functions with zero-mean and identical variance of $1$ composed of two Gaussian kernels shifted by $\pm \mu$.}
 \label{fig:bimodal}
\end{figure}

\begin{figure}[tb]
 \centering
 {
 \sffamily
 \psfrag{Y}[c][c]{\small cost $J$}
 \psfrag{X}[c][c]{\small degree of unimodality $1-\mu$\phantom{1}}
\includegraphics[width=0.43\textwidth]{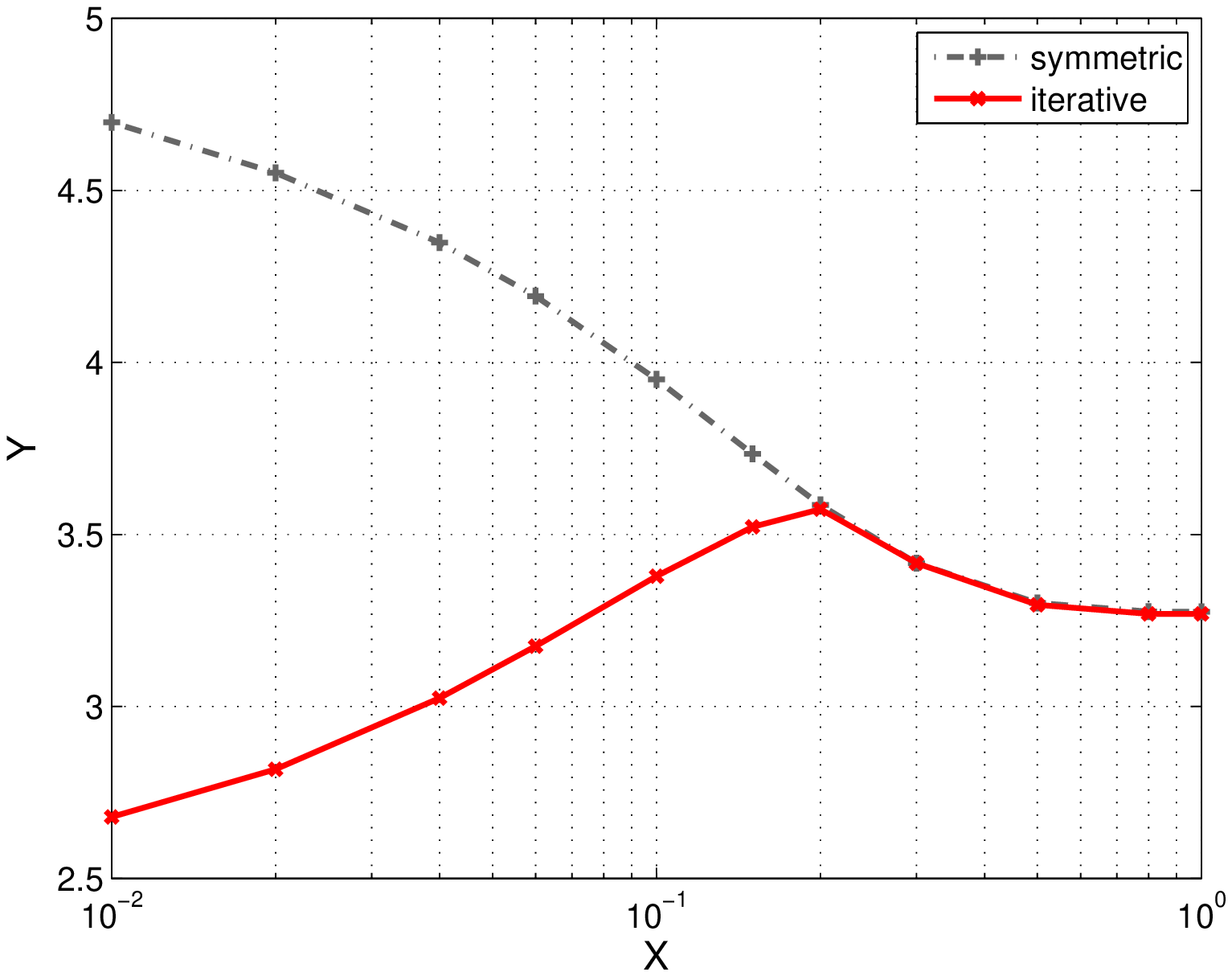}
}
 \caption{Performance comparison for a horizon $N=10$. The degree of unimodality $1-\mu$ ($1$ for zero-mean Gaussian and $0$ for Bernoulli process with discrete parameters in \{-1,1\}) is drawn on a logarithmic scale.}
 \label{fig:perf}
\end{figure}

\begin{figure}[tb]
 \centering
 {
 \sffamily
 \psfrag{Y}[c][c]{\small }
  \psfrag{a}[c][c]{\footnotesize $\alpha_0$}
 \psfrag{X}[c][c]{\small state $x_0$ }
\includegraphics[width=0.43\textwidth]{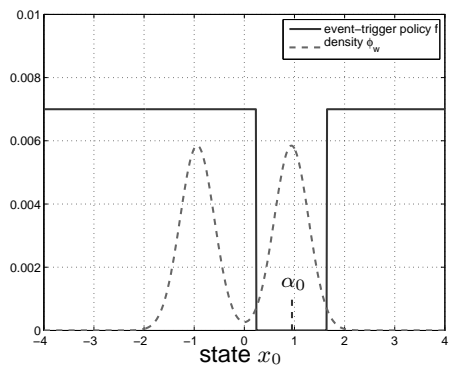}
}
 \caption{Event-trigger policy $f$ (scaled by 0.007) resulting from the iterative Algorithm \ref{al:1} with initial noise distribution $\phi_w$, $\mu=0.95$, horizon $N=1$ and initial choice $\alpha_0^0=0.1$. The algorithm converges to $\alpha_0=0.95$ and an asymmetric event-trigger~\mbox{$f(x_0)=\mathds{1}_{\{[0.25,0.65]\}^\text{c}}(x_0)$}. }
 \label{fig:alphalaw}
\end{figure} 

We observe that for $\mu<0.8$ the peaks of the bimodal density function are less distinctive. Therefore, we can not expect that large gains of the iterative procedure can be attained compared with the optimal symmetric solution for $\mu<0.8$.
A performance comparison of the iterative procedure and the optimal symmetric event-trigger is drawn in Fig. \ref{fig:perf} for a horizon $N=10$ and various $\mu$. The initialization for the iterative procedure is chosen to be $\alpha^0\equiv 0.1$. As expected the costs are almost identical for $\mu\in[0,0.8]$. This also validates Theorem \ref{thm:1}, since $\phi_w$ is unimodal for sufficient small choice of $\mu$.  For $\mu>0.8$ a rapid performance improvement can be observed. In the limit $\mu\goto 1$, the costs are reduced by a factor of~$45\%$ by the iterative procedure compared with the optimal symmetric event-trigger. This may seem surprising, because the cost function as well as the noise distribution are all even functions.
Fig. \ref{fig:alphalaw} gives an illustrative explanation of such significant performance improvement for~\mbox{$N=1$} and~\mbox{$\mu=0.95$}. With an initial value $\alpha_0^0=0.1$, the iterative algorithm converges to $\alpha_0=0.95$ and an asymmetric event-trigger policy $f(x_0)=\mathds{1}_{\{[0.25,1.65]\}^\text{c}}(x_0)$, whereas the optimal symmetric event-trigger is given by $f(x_0)=\mathds{1}_{\{[-0.7,0.7]\}^\text{c}}(x_0)$.
The event-trigger and estimator resulting from the iterative procedure have therefore an implicit agreement, if no state update is sent over the resource-constrained channel. In that case, no transmission indicates the estimator that the state $x_0$ is situated at the right peak resulting in the estimate $\alpha_0$. In contrast to that, the linear predictor defined in (\ref{eq:lp}), which is optimal for the symmetric event-trigger, is independent to the choice of the threshold of the symmetric event-trigger and the noise-distribution.


\section{CONCLUSIONS}

By considering the joint optimal design of state estimator and event-trigger as a two-player problem, we were able to develop an efficient iterative algorithm, which alternate between optimizing the estimator while fixing the event-trigger and vice versa. The iterative method shows special properties in the case of unimodal and symmetric distributions of the uncertainty. In this situation it is shown that the optimal event-triggered estimator can be obtained by a separate design and is given by a linear predictor and a symmetric threshold policy. This result is along previous results and offers an alternative line of proof for showing that such separate design is optimal in case of symmetric unimodal distributions.

In the case of symmetric and bimodal distributions, the iterative procedure offers a systematic method, which leads surprisingly to asymmetric event-triggers and biased estimators that outperform symmetric threshold policies. 

Similar properties of the iterative method are likely to hold as well in the case of multidimensional systems and are a subject of current investigations. Further research also investigates to extend the proposed iterative procedure to a sensor network setting, where various spatially distributed sensors shall find a common state estimate through exchanging information through a common digital network.


\bibliographystyle{ieeetr}
{\small
\bibliography{sigproc.bib}

\begin{thebibliography}{10}

\bibitem{rabi:siam2012}
M.~Rabi, G.~V. Moustakides, and J.~S. Baras, ``Adaptive sampling for linear
  state estimation,'' {\em SIAM Journal on Control and Optimization}, vol.~50,
  no.~2, pp.~672--702, 2012.

\bibitem{AstromBernhardsson02_lebesgue}
K.~{\AA}str{\"o}m and B.~Bernhardsson, ``{Comparison of Riemann and Lebesgue
  sampling for first order stochastic systems},'' in {\em Decision and Control,
  2002, Proceedings of the 41st IEEE Conference on}, (Las Vegas, Nevada),
  pp.~2011--2016, 2002.

\bibitem{otanez2002}
P.~Otanez, J.~Moyne, and D.~Tilbury, ``Using deadbands to reduce communication
  in networked control systems,'' in {\em American Control Conference, 2002.
  Proceedings of the 2002}, vol.~4, pp.~3015--3020, 2002.

\bibitem{hirchenetwork}
S.~Hirche, P.~Hinterseer, E.~Steinbach, and M.~Buss, ``Network traffic
  reduction in haptic telepresence systems by deadband control,'' in {\em
  Proceedings IFAC World Congress, International Federation of Automatic
  Control}, 2005.

\bibitem{Xu2004_optimal}
Y.~Xu and J.~Hespanha, ``Optimal communication logics in networked control
  systems,'' in {\em Decision and Control, 2004, Proceedings of the 43rd IEEE
  Conference on}, (Nassau, Bahamas), pp.~3527--3532, 2004.

\bibitem{lipsa2011}
G.~Lipsa and N.~Martins, ``Remote state estimation with communication costs for
  first-order {LTI} systems,'' {\em Automatic Control, IEEE Transactions on},
  vol.~56, no.~9, pp.~2013--2025, 2011.

\bibitem{karlsson2011}
J.~Karlsson, A.~Gattami, T.~J. Oechtering, and M.~Skoglund, ``Iterative
  source-channel coding approach to {Witsenhausen's} counterexample,'' in {\em
  American Control Conference (ACC'11)}, pp.~5348 --5353, 2011.

\bibitem{hajek2008paging}
B.~Hajek, K.~Mitzel, and S.~Yang, ``Paging and registration in cellular
  networks: Jointly optimal policies and an iterative algorithm,'' {\em
  Information Theory, IEEE Transactions on}, vol.~54, no.~2, pp.~608--622,
  2008.

\bibitem{Bertsekas2005v1}
D.~P. Bertsekas, {\em Dynamic programming and optimal control. {V}ol. {I}}.
\newblock 3rd ed., Athena Scientific, Belmont, MA, 2007.

\bibitem{MoHi2009}
A.~Molin and S.~Hirche, ``On {LQG} joint optimal scheduling and control under
  communication constraints,'' in {\em Decision and Control, 2009, Proceedings
  of the 48th IEEE Conference on}, (Shanghai, China), pp.~5832--5838, 2009.

\end{thebibliography}
}

\appendix
\section{Proof of Theorem \ref{thm:1}}    

\begin{proof}
First, we define the following time-variant transformations of $e_k$ and $\alpha_{k,\tau_k}$ by
\begin{align*}
y_k&=\frac{1}{a^k}e_k,\quad k=0,\ldots,N-1,\\
\beta_{k,\tau_k}&=\frac{1}{a^k}\alpha_{k,\tau_k},\quad k=0,\ldots,N-1,\\
&\phantom{\hspace{20mm}}\tau_k=-1,\ldots,k-1.
\end{align*}
By this transformation, the running cost and the Bellman operator are defined by
\begin{align*}
&\hat c^{\beta_k}_k(y_k,\tau_k,\delta_k)=(1-\delta_k)a^{2k}|y_k-\beta_{k,\tau_k}|^2 + \lambda \delta_k
\\
&\hat{\mathcal{T}}^{\beta_k}_k \hat J_{k+1}(\cdot)=\min_{\delta_k\in\{0,1\}}\hat c^{\beta_k}_k(\cdot,\delta_k)+\\ &\phantom{\hspace{20mm}}+\E\left[\hat J_{k+1}(y_{k+1},\tau_{k+1}) |\cdot,\delta_k\right]
\end{align*}
The optimization problem (\ref{eq:opt3}) can then be restated by replacing $J$ with $\hat J$ defined by 
\begin{align*}
\hat J(f,\beta)&=\E_{f}\left[\sum_{k=0}^{N-1} \hat c^{\beta_k}_k(y_k,\tau_k,\delta_k)\right]
\end{align*}
The event-trigger $f_k$ is a function of $y_k$ and $\tau_k$, where $y_k$ evolves by
\begin{align*}
y_{k+1}&=(1-\delta_k)y_k+v_k,\quad y_0=e_0
\end{align*}
with $v_k=\frac{1}{a^k}w_k$ and the evolution of $\tau_k$ is given by~(\ref{eq:tauev}).  It is easy to see that the distribution of $v_k$ is again unimodal and symmetric.
In the following, we adapt Algorithm \ref{al:1} to the transformed system. We consider $\beta^i$ as a vector in $\mathbb{R}^{\frac{1}{2}N(N+1)}$ that evolves by the procedure defined by (\ref{eq:iter2}). 
By this view,  $\beta^i$ is the state of a non-linear time-invariant discrete-time system described by
\begin{align}\label{eq:iter2}
\begin{split}
f^i=\argmin_f \hat J(f,\beta^i)\\
\beta_{k,\tau}^{i+1}=\E_{f^i}\left[\sum_{l=\tau}^{k-1} v_l | \delta_{\tau+1}=0,\ldots,\delta_{k}=0\right]
\end{split}
\end{align}
In order to analyze the asymptotic behavior with increasing $i$, we introduce the following Lyapunov candidate $V(\beta^i)$ defined by
\begin{align*}\label{eq:lyap}
V(\beta^i)=|\beta^i|_\infty.
\end{align*}
In order to show that $V(\beta^i)$ is decreasing with respect to~$i$, we establish several auxiliary results.
For notational convenience, let $\beta^i_\infty$ be defined as
\[
\beta^i_\infty=|\beta^i|_\infty.
\]
What we want to show first is that for every event-trigger~$f^i$ resulting from (\ref{eq:iter2}) for a given $\beta^i$, we have 
\begin{align}
\begin{split}\label{eq:revLAW}
f^i_k(\beta^i_\infty+\Delta,\tau)=0\quad\implies\quad f^i_k(\beta^i_\infty-\Delta,\tau)=0\\
\hfill\forall \Delta\geq 0,k=0,\ldots,N-1,\tau=-1,\ldots,k-1.
\end{split}
\end{align}
The validity of above implication is shown by induction starting with $k=N-1$. We fix a $\beta^i$ and apply dynamic programming to obtain $f^i$. Because of $\hat J_N\equiv 0$, the value function $\hat J_{N-1}$ is then given by
\begin{align*}
\hat J_{N-1}(y,\tau)=\min_{\delta\in\{0,1\}}\hat c^{\beta^i_{N-1}}_{N-1}(y,\tau,\delta)
\end{align*}
Note that the running cost exhibits the symmetry property
\begin{align*}
&\hat c^{\beta^i_k}_k(\beta^i_{k,\tau}+\Delta,\tau,\delta)=\hat c^{\beta^i_k}_k(\beta^i_{k,\tau}-\Delta,\tau,\delta),
\\ &\qquad\qquad\qquad\qquad\forall \Delta\in\mathbb{R},\delta\in\{0,1\}
\end{align*}
with $\tau=-1,\ldots,k-1$ and the monotonicity property
\begin{align*}
&0\leq \Delta_1\leq\Delta_2\\
&\implies \hat c^{\beta^i_k}_k(\beta^i_{k,\tau}+\Delta_1,\tau,\delta) \leq \hat c^{\beta^i_k}_k(\beta^i_{k,\tau}+\Delta_2,\tau,\delta)
\end{align*}
for $\delta\in\{0,1\}$ and $\tau=-1,\ldots,k-1$. Both properties are preserved after taking the minimum over $\delta$ implying that they are also valid for $\hat J_{N-1}$. Therefore, we obtain
\begin{align}\label{eq:revIA}
\hat J_{N-1}(\beta^i_\infty+\Delta,\tau) \geq \hat J_{N-1}(\beta^i_\infty-\Delta,\tau),\quad \forall \Delta\geq 0
\end{align}
with $\tau=-1,\ldots,N-1$. For $\Delta\leq\beta^i_\infty-\beta^i_{k,\tau}$, inequality (\ref{eq:revIA}) is valid due to the monotonicity property of $\hat J_{N-1}$. In case of~\mbox{$\Delta>\beta^i_\infty-\beta^i_{k,\tau}$}, we have
\begin{align*}
&\hat J_{N-1}(\beta^i_\infty-\Delta,\tau)\\
&=\hat J_{N-1}(\beta^i_\infty-\beta^i_{k,\tau}+\beta^i_{k,\tau}-\Delta,\tau)\\
&=\hat J_{N-1}(\beta^i_{k,\tau}+(\beta^i_{k,\tau}-\beta^i_\infty+\Delta,\tau)\\
&\leq \hat J_{N-1}(\beta^i_\infty+\Delta,\tau).
\end{align*}
The second equality is due to the symmetry property and the inequality is due to the monotonicity property as 
\[
\beta^i_{k,\tau}\leq\beta^i_{k,\tau}+(\beta^i_{k,\tau}-\beta^i_\infty+\Delta)\leq\beta^i_\infty+\Delta.
\]
By knowing that the value function $\hat J_{N-1}=\lambda$ is constant for all pairs $(y,\tau)$, when $\delta_{N-1}=1$, we have
\begin{align*}
f^i_{N-1}(\beta^i_\infty-\Delta,\tau)=1\\
\implies \lambda=\hat J_{N-1}(\beta^i_\infty-\Delta,\tau)\leq \hat J_{N-1}(\beta^i_\infty+\Delta,\tau)\\
\implies J_{N-1}(\beta^i_\infty+\Delta,\tau)=\lambda\\
\implies f^i_{N-1}(\beta^i_\infty+\Delta,\tau)=1
\end{align*}
Next, we show that by applying the Bellman operator will preserve the inequality given by (\ref{eq:revIA}). Assume, we have
\begin{align}\label{eq:revIhypo}
\hat J_{k+1}(\beta^i_\infty+\Delta,\tau) \geq \hat J_{k+1}(\beta^i_\infty-\Delta,\tau),\quad \forall \Delta\geq 0
\end{align}
with $\tau=-1,\ldots,k-1$. We want to show statement (\ref{eq:revIhypo}) implies
\begin{align}\label{eq:revIcon}
\hat J_{k}(\beta^i_\infty+\Delta,\tau) \geq \hat J_{k}(\beta^i_\infty-\Delta,\tau),\quad \forall \Delta\geq 0
\end{align}
with $\tau=-1,\ldots,k-1$.
The Bellman equation is
\begin{align*}
\hat J_{k}= \hat{\mathcal{T}}^{\beta^i_k}_k \hat J_{k+1}
\end{align*}
For all pairs $(y,\tau)$, where the argument of the minimization in $\hat{\mathcal{T}}^{\beta^i_k}_k$ yields $\delta_k=1$, $\hat J_{k}$ is constant. This also implies that $\hat J_{k}$ takes its maximum for these pairs. 
In the following, we are interested in outcomes for $\hat J_{k}$ in case of $\delta_k=0$. Along the same lines as for $\hat J_{N-1}$, we obtain for the running cost $\hat c^{\beta^i_k}_k$
\begin{align}\nonumber
&\hat c^{\beta^i_k}_k(\beta^i_\infty+\Delta,\tau,\delta)\geq \hat c^{\beta^i_k}_k(\beta^i_\infty-\Delta,\tau,\delta),\\
&\quad\forall \Delta\in\mathbb{R},\delta\in\{0,1\}\label{eq:revRC}
\end{align}
with $\tau=-1,\ldots,k-1$.
We rewrite $\hat J_{k+1}$ to
\[
\hat J_{k+1} = \hat J^\text{SYM}_{k+1} + \hat J^\text{REM}_{k+1}
\]
with
\begin{align}
\hat J^\text{SYM}_{k+1}(y,\tau)&=\begin{cases}
\hat J_{k+1}(y,\tau) & y \leq \beta^i_\infty\\
\hat J_{k+1}(\beta^i_\infty + (\beta^i_\infty - y),\tau) & y > \beta^i_\infty
\end{cases}\\
\hat J^\text{REM}_{k+1}(y,\tau)&=J_{k+1}(y,\tau)-\hat J^\text{SYM}_{k+1}(y,\tau)
\end{align}
 By the assumption (\ref{eq:revIhypo}), we have 
 \begin{align}\label{eq:revREM}
 \hat J^\text{REM}_{k+1}(y,\tau)\begin{cases}
= 0 & y \leq \beta^i_\infty\\
 \geq 0 &y > \beta^i_\infty
 \end{cases}
\end{align}

\addtolength{\textheight}{-75mm}   

Taking the expectation of $\hat J_{k+1}$ given $\delta_k$, $y_k$ and $\tau_k$, gives either a constant function over $(y_k,\tau_k)$ for $\delta_k=1$ or is given by convolution with the density function of~$v_k$ for~\mbox{$\delta_k=0$} denoted by $\phi$. By assumption the density function $\phi$ is symmetric and unimodal. By linearity of the convolution operator, we follow
\begin{align}\nonumber
&\E\left[\hat J_{k+1} |\cdot,\tau_k,\delta_k=1\right]\\
&=J^\text{SYM}_{k+1}(\cdot,\tau_k) *\phi + J^\text{REM}_{k+1}(\cdot,\tau_k) *\phi\label{eq:revIconv}
\end{align}
For the first term of (\ref{eq:revIconv}), we observe that symmetry is preserved, i.e.,
\begin{align}\nonumber
&(J^\text{SYM}_{k+1}(\cdot,\tau_k) *\phi)(\beta^i_\infty+\Delta)\\
& =(J^\text{SYM}_{k+1}(\cdot,\tau_k) *\phi)(\beta^i_\infty-\Delta)\label{eq:revSYM2}
\end{align}
for $\Delta\in\mathbb{R}$. On the other hand due to (\ref{eq:revREM}) and
\[
\phi(y-(\beta^i_\infty+\Delta)) \geq  \phi(y-(\beta^i_\infty-\Delta)), \Delta\geq 0, y\geq \beta^i_\infty
\]
we have for any $\Delta\geq 0$
\begin{align}\nonumber
&(J^\text{REM}_{k+1}(\cdot,\tau_k) *\phi)(\beta^i_\infty+\Delta)\geq\\ &\geq(J^\text{REM}_{k+1}(\cdot,\tau_k) *\phi)(\beta^i_\infty-\Delta). \label{eq:revREM2}
\end{align}
Summing up the terms and taking the minimum to obtain~$\hat J_{k}$, we obtain statement (\ref{eq:revIcon}) by using (\ref{eq:revRC}), (\ref{eq:revSYM2}) and (\ref{eq:revREM2}). By induction, statement (\ref{eq:revIcon}) is valid for all~\mbox{$k=0,\ldots,N-1$}.
Along the same lines as for $N-1$, we follow  (\ref{eq:revLAW}) from statement (\ref{eq:revIcon}).
Equivalently to (\ref{eq:revLAW}), it can be showed that
\begin{align*}
&f^i(-\beta^i_\infty-\Delta,\tau)=0\\
&\implies\quad f^i(-\beta^i_\infty+\Delta,\tau)=0\\
&\hfill\forall \Delta\geq 0,k=0,\ldots,N-1,\tau=-1,\ldots,k-1.
\end{align*}
Let $\phi^i_{y_k|\tau}$ be defined as the density function of the conditional probability distribution of $y_k$ given $\tau_k$ and $\delta_k=0$, when using event-trigger $f^i$. The definition of $\phi^i_{y_k|\tau}$ yields the following calculation of $\beta_{k,\tau}^{i+1}$
\begin{align*}
 \beta_{k,\tau}^{i+1} = \int_{y\in\mathbb{R}} y \cdot\phi^i_{y_k|\tau}(y) dy
\end{align*}
By assuming an event-trigger $f^i$ that satisfies statement~(\ref{eq:revLAW}), we show inductively that
\begin{align}\label{eq:forDENS}
\begin{split}
\phi^i_{y_k|\tau}(\beta^i_\infty+\Delta)\leq\phi^i_{y_k|\tau}(\beta^i_\infty-\Delta),\quad \forall \Delta\geq 0,\\
\hfill k=0,\ldots,N-1,\tau=-1,\ldots,k-1.
\end{split}
\end{align}
For $k=0$, $\phi^i_{y_k|\tau}$ is calculated by truncating the density function $\phi_{y_0}$ of the initial state $y_0$ at all $(y,\tau)$, where~$f^i_k$ takes a value of $1$ and by normalizing the resulting function, i.e.
\begin{align*}
\phi^i_{y_0|\tau}(y)=\frac{\phi_{y_0}(y)\cdot(1-f^i_0(y,\tau))}{\int_{y\in\mathbb{R}} \phi_{y_0}(y)\cdot(1-f^i_0(y,\tau)) dy}.
\end{align*}
As $\phi_{y_0}$ is an even and unimodal function, we have
\begin{align*}
\phi^i_{y_0|\tau}(\beta^i_\infty+\Delta) \leq \phi^i_{y_0|\tau}(\beta^i_\infty-\Delta),\\
\hfill \Delta\geq 0, f^i_k(\beta^i_\infty+\Delta,\tau)=0.
\end{align*}
For all $(y,\tau)$ with $f^i_k(\beta^i_\infty+\Delta,\tau)=1$, we have 
\[
\phi^i_{y_0|\tau}(\beta^i_\infty+\Delta)=0,
\]
which trivially validates inequality (\ref{eq:forDENS}). Similarly as for~\mbox{$k=0$} and~\mbox{$\tau=-1$}, we can prove the validity of~(\ref{eq:forDENS}) for $k\in\{1,N-1\}$ and $\tau=k-1$ by replacing the density function $\phi_{y_0}$ by the density function $\phi_{v_{k-1}}$ of the noise variable $v_{k-1}$.
By assuming that inequality~(\ref{eq:forDENS}) is satisfied for time step $k$, we prove that (\ref{eq:forDENS}) holds for $k+1$ for an arbitrary $k\in\{0,\ldots,N-2\}$ and fixed~\mbox{$\tau\in\{-1,\ldots,k-1\}$}.
For a fixed $\tau$, the density function $\phi^i_{y_{k}|\tau}(y)$  can be calculated by the recursion
\begin{align}
\phi^i_{y_{k+1}|\tau}(y)\hspace{-2pt}=\hspace{-2pt}\frac{(\phi^i_{y_k|\tau}*\phi_{v_k})(y)\cdot(1-f^i_k(y,\tau))}{\int_{y\in\mathbb{R}} (\phi^i_{y_k|\tau}*\phi_{v_k})(y)\cdot(1\hspace{-2pt}-\hspace{-2pt}f^i_k(y,\tau)) dy}.
\end{align}
As having already been observed for $\hat J_{k+1}$, the convolution of $\phi^i_{y_k|\tau}$ with $\phi_{v_k}$ preserves the inequality (\ref{eq:forDENS}). With the same arguments as for $k=0$, we follow that 
\begin{align*}
\phi^i_{y_k|\tau}(\beta^i_\infty+\Delta) \leq \phi^i_{y_k|\tau}(\beta^i_\infty-\Delta),\\
\hfill \Delta\geq 0
\end{align*}
implies
\begin{align*}
\phi^i_{y_{k+1}|\tau}(\beta^i_\infty+\Delta) \leq \phi^i_{y_{k+1}|\tau}(\beta^i_\infty-\Delta),\\
\hfill \Delta\geq 0,
\end{align*}
which concludes the induction.
Inequality (\ref{eq:forDENS}) implies that $\beta_{k,\tau}^{i+1}\leq \beta^i_\infty$. Similarly, it can be showed that~\mbox{$\beta_{k,\tau}^{i+1}\geq-\beta^i_\infty$}. In fact, it is straight forward to see that the inequalities are strict for all $\beta^i_\infty\neq 0$ and therefore the Lyapunov candidate $V$ decreases with increasing $i$ for all~\mbox{$\beta\neq 0$}. Hence, the iterative procedure defined in~(\ref{eq:iter2}) converges to $0$ for any initial condition of $\beta$. By transforming $\beta$ back into the initial state space system, we can conclude the proof.
\end{proof}

\end{document}